\newcommand{\stw}{\mathbb{S}^2}
\newtheorem{theorem}{Theorem}
\newtheorem{lemma}{Lemma}
\newtheorem{rem}{Remark}
\newtheorem{question}{Question}
\begin{document}

\title{The 10 antipodal pairings of strongly involutive polyhedra on the sphere}

\author[Javier Bracho]{Javier Bracho}
\address{Instituto de Matem\'aticas, UNAM, Ciudad Universitaria, M\'exico, CP. 04510}
\email{jbracho@im.unam.mx}
\author[Luis Montejano]{Luis Montejano}
\address{Instituto de Matem\'aticas, UNAM, Campos Juriquilla,
Quer\'etaro, M\'exico, CP. 07360}
\email{luis@im.unam.mx}
\author[Eric Pauli]{Eric Pauli}
\address{Instituto de Matem\'aticas, UNAM, Campos Juriquilla,
Quer\'etaro, M\'exico, CP. 07360}
\email{eric@im.unam.mx}
\author[Jorge L. Ram\'irez Alfons\'in]{Jorge L. Ram\'irez Alfons\'in}
\address{IMAG, Univ.\ Montpellier, CNRS, Montpellier, France }
\email{jorge.ramirez-alfonsin@umontpellier.fr}


\maketitle

\begin{abstract}

It is known that {\em strongly involutive} polyhedra are closely related to self-dual maps where the antipodal function acts as duality isomorphism. Such a family of polyhedra appears in different combinatorial, topological and geometric contexts, and is thus attractive to be studied. In this note, we determine the 10 {\em antipodal pairings} among the classification of the 24 self-dual pairings $Dual(G)\rhd  Aut(G)$ of self-dual maps $G$. We also present the {\em orbifold} associated to each antipodal pairing and describe explicitly the corresponding {\em fundamental regions}.
We finally explain how to construct two infinite families of strongly involutive polyhedra (one of them new) by using their {\em doodles} and the action of the corresponding orbifolds. 
\end{abstract}

\section{Introduction}

A {\em polyhedron} is a planar graph $G$ that is simple (without loops and multiple edges) and 3-connected. A {\em face} $F$ of an embedding of a planar graph is a region bounded by a cycle $C$. We say that a vertex $v$ belongs to a face $F$, denoted by $v\in F$, if $v$ is in the cycle $C$.  Note that every edge borders exactly two faces. The {\em dual} of $G$, denoted by $G^*$, is defined as follows:  Each face in $G$ is a vertex in $G^*$ and two vertices in $G^*$ are adjacent if and only if the faces share an edge in $G$. A polyhedron $G$ is said to be {\em self-dual} if there is an isomorphism of graphs $\tau : G \rightarrow G^{*}$. This isomorphism is called a \emph{duality isomorphism}, and it will be called a \textit{strong involution} if it satisfies the following conditions:
\smallskip

$(i)$ For each pair $u,v$ of vertices, $u\in \tau (v)$ if and only if $v\in \tau (u)$ and

$(ii)$ For every vertex $v$, we have that $v\notin \tau (v)$.
\smallskip

We say that a self-dual polyhedron is \textit{strongly involutive} if it admits a strong involution. The above conditions are the combinatorial counterpart (in the 3-dimensional case) of a more general geometric object called {\em strongly self-dual polytopes}, introduced by Lov\'asz \cite{L}, see also \cite{G}. Strongly involutive polyhedra are relevant in different contexts: they are related to convex geometric problems such as the well-known V\'azsonyi's problem \cite{E,G1}, with ball polyhedra \cite{BLNP,KMP,KS}, the chromatic number of distance graphs on the sphere \cite{L}, the Reuleaux polyhedra \cite{gyivan, reuleaux}, constant width bodies \cite{MMO} and more recently in relation with questions concerning the symmetry as well as the amphicheirality of knots \cite{MRAR2} and projective links \cite{MRAR3}.

There are known infinite families of strongly involutive polyhedra \cite[Propositions 1, 2 and 3 and Theorem 3]{MRAR1}. Figure \ref{fig1} illustrates two simple examples.

\begin{figure}[h]
\centering
\includegraphics[scale=0.2]{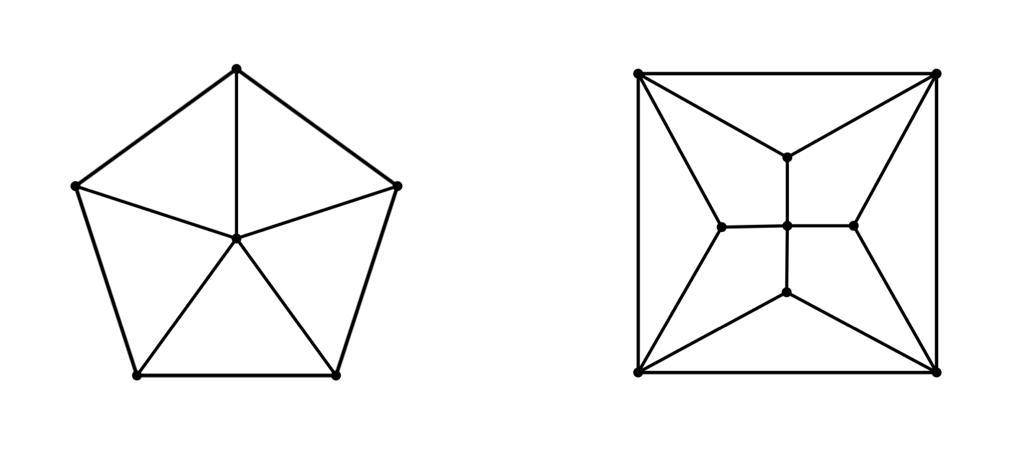}
\caption{(Left) 5-wheel. (Right) 4-hyperwheel.}
\label{fig1}
\end{figure}

Strongly involutive isomorphisms are closely related to the notion of {\em antipodal isometries} in the 2-sphere. 

A \textit{map} $G=(V, E, F)$ is the image of an embedding of $G$ into $\mathbb{S}^2$ where the set of vertices is a collection of distinct points in $\mathbb{S}^2$ and the set of edges is a collection of arcs joining pairs of points in $V$ satisfying that for any pair of arcs, say $a$ and $a'$, we have that $a \cap a'$ is either empty or an endpoint of one of the arcs. Any embedding of the topological realization of $G$ into $\mathbb{S}^2$ partitions the 2-sphere into simply connected regions of $\mathbb{S}^2\setminus G$ called the \textit{faces} $F$ of the embedding.
\smallskip

Recall that $\mbox{Aut}(G)$ is the group formed by the set of all {\em automorphisms} of $G$ (i.e., the set  of isomorphisms of $G$ into itself) and let $\mbox{Iso}(G)$ be the set of all duality isomorphisms of $G$ into $G^*$. We notice that $\mbox{Iso}(G)$ is not a group since the composition of any two of them is an automorphism. Let us suppose that $G=(V,E,F)$ is a self-dual map so that there is an isomorphism $\phi:(V,E,F)\rightarrow (F^*,E^*,V^*)$. Following $\phi$ with the correspondence $*$ gives a permutation on $V\cup E\cup F$ which preserve incidences but reverses dimension of the elements. The collection of all such permutations generates a group $\mbox{Dual}(G)=\mbox{Aut}(G)\cup \mbox{Iso}(G)$ in which the automorphism group $\mbox{Aut}(G)$ is contained as a subgroup of index 2.   
 \smallskip
 
It is known \cite[Lemma 1]{SS} that for a given map $G$ there is a homeomorphism $\rho$ of $\mathbb{S}^2$ to itself such that for every $\sigma\in \mbox{Aut}(G)$ we have that $\rho\sigma\in \mbox{Isom}(\mathbb{S}^2)$ where $\mbox{Isom}(\mathbb{S}^2)$ is the group of isometries of the 2-sphere. In other words, any planar graph $G$ can be drawn on the 2-sphere such that any automorphism of the resulting map acts as an isometry of the sphere.  This was extended in \cite{SS} by showing that given any self-dual planar graph $G$ and its dual $G^*$ can be drawn on the 2-sphere so that $\mbox{Dual}(G)$ is realized as a group of spherical isometries. 
\smallskip

In \cite[Theorem 9]{BMPRA} (see also \cite[Corollary 2]{MRAR1}), it was proven that if $G$ is strongly involutive then the corresponding isometry is the antipodal function ${\alpha : \mathbb{S}^2 \rightarrow \mathbb{S}^2}$, $\alpha (x) = -x$.


The couple $\mbox{Dual}(G) \rhd \mbox{Aut}(G)$ is called the {\em self-dual pairing} of the map $G$. In \cite{SS}, all self-dual maps were enumerated and classified. In the notation of \cite[page 36-38]{coxeter} the 24 possible pairings are:
 \smallskip
 
 - among the infinite classes $[2,q]\rhd [q], [2,q]^+\rhd [q]^+, [2^+,2q]\rhd [2q], [2,q^+]\rhd [q]^+$ and $[2^+,2q^+]\rhd [2q]^+$ with $q$ positive integer, or 
 \smallskip
 
 - among the special pairings $[2]\rhd [1],  [2]\rhd [2]^+, [4]\rhd [2], [2]^+\rhd [1]^+, [4]^+\rhd [2]^+, [2,2]\rhd [2,2]^+, [2,4]\rhd [2^+,4], [2,2]\rhd [2,2^+], [2,4]\rhd [2,2], [2,4]^+\rhd [2,2]^+, [2^+,4]\rhd [2,2]^+, [2^+,4]\rhd [2^+,4^+], [2,4^+]\rhd [2^+,4^+], [2,2^+]\rhd [2^+,2^+], [2,4^+]\rhd [2,2^+], [2,2^+]\rhd [1], [3,4]\rhd [3,3], [3,4]^+\rhd [3,3]^+$ and $[3^+,4]\rhd [3,3]^+$.
\smallskip

In Section \ref{sec2}, we classify the self-dual pairings $\mbox{Dual}(G) \rhd \mbox{Aut}(G)$ that correspond to antipodal isometries, that is, self-dual pairings that give rise to strongly involutive polyhedra.

\begin{theorem}\label{thm1} Every strongly involutive map corresponds to one of the following self-dual pairings: $[q]\triangleleft [2,q],[q]^{+}\triangleleft [2,q^{+}]$  for $q$ even; ${[q]\triangleleft [2^{+},2q]}, {[q]^{+}\triangleleft [2^{+},2q^{+}]}$  for $q$ odd, $[2,2]^{+}\triangleleft [2,2], [2^{+},4]\triangleleft [2,4], [2^{+},4^{+}]\triangleleft [2,4^{+}], [1]\triangleleft [2,2^{+}],[3,3]\triangleleft [3,4] \mbox{ or } [3,3]^{+}\triangleleft [3^{+},4]$.
\end{theorem}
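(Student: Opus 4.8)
The plan is to reduce the statement to a purely group-theoretic condition and then to run a finite check over the $24$ pairings. By \cite[Lemma 1]{SS} the group $\mbox{Dual}(G)$ is realized as a finite subgroup $H\leq \mbox{Isom}(\mathbb{S}^2)=O(3)$, inside which $\mbox{Aut}(G)$ is realized as an index-$2$ subgroup $K$; the coset $H\setminus K$ consists of the isometries inducing duality isomorphisms. By \cite[Theorem 9]{BMPRA} (see also \cite[Corollary 2]{MRAR1}), if $G$ is strongly involutive then its strong involution is realized by the antipodal map $\alpha$, i.e.\ by $-\mathrm{Id}\in O(3)$, which must therefore lie in $H\setminus K$. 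Conversely, I would check that whenever $-\mathrm{Id}\in H\setminus K$ the corresponding duality isomorphism is automatically a strong involution: since $\alpha$ is a fixed-point-free, incidence-preserving involution, each vertex $v$ is carried to a face $\alpha(v)$, condition $(i)$ holds because $\alpha^{2}=\mathrm{Id}$, and condition $(ii)$ holds because a point and its antipode cannot be incident. Hence
\[
G \text{ is strongly involutive} \iff -\mathrm{Id}\in H\setminus K .
\]

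The second step records why this condition is well defined and how it prunes the list. Because $-\mathrm{Id}$ is central in $O(3)$, conjugation fixes it, so membership of $-\mathrm{Id}$ in $H$ (resp.\ $K$) depends only on the conjugacy class of the realization and not on the particular choice of $\rho$ in \cite[Lemma 1]{SS}. Moreover $\det(-\mathrm{Id})=-1$, so $-\mathrm{Id}\notin SO(3)$; consequently every pairing whose $\mbox{Dual}(G)$ is a rotation group (the chiral families $[2,q]^{+}\rhd[q]^{+}$, $[2,4]^{+}\rhd[2,2]^{+}$, $[3,4]^{+}\rhd[3,3]^{+}$, and the like) is immediately discarded. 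For the remaining pairings I must test the two conditions $-\mathrm{Id}\in H$ and $-\mathrm{Id}\notin K$ separately, and there are genuinely two distinct ways to fail: the central inversion may be absent from $\mbox{Dual}(G)$ altogether, or it may already belong to $\mbox{Aut}(G)$, in which case $\alpha$ is a symmetry of $G$ rather than a duality.

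The third step is the finite verification, using the standard inventory of which named point groups contain the central inversion. Here $[2,q]=D_{qh}$ and $[2,q^{+}]=C_{qh}$ contain $-\mathrm{Id}$ exactly when $q$ is even, while $[2^{+},2q]=D_{qd}$ and $[2^{+},2q^{+}]=S_{2q}$ contain it exactly when $q$ is odd; among the exceptional groups $[2,2]=D_{2h}$, $[2,4]=D_{4h}$, $[2,4^{+}]=C_{4h}$, $[2,2^{+}]=C_{2h}$, $[3,4]=O_h$ and $[3^{+},4]=T_h$ all contain $-\mathrm{Id}$, whereas $[q]=C_{qv}$, $[q]^{+}=C_q$, $[2^{+},4]=D_{2d}$, $[2^{+},4^{+}]=S_4$, $[1]$, $[3,3]=T_d$ and $[3,3]^{+}=T$ do not. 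Crossing these facts against the $24$ pairings of \cite{SS} keeps precisely those in which $-\mathrm{Id}$ lies in $\mbox{Dual}(G)$ but not in $\mbox{Aut}(G)$: the four infinite families with their parity constraints and the six exceptional pairings listed in the statement. In particular pairings such as $[2,2]\rhd[2,2^{+}]$ or $[2,2^{+}]\rhd[2^{+},2^{+}]$ are rejected not because they lack central symmetry but because $-\mathrm{Id}$ already sits inside $\mbox{Aut}(G)$.

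The main obstacle I anticipate is not conceptual but bookkeeping: correctly matching each bracket symbol of \cite{coxeter} to its concrete subgroup of $O(3)$ and its inversion content, and in particular pinning down the even/odd splits that separate the prismatic families $D_{qh},C_{qh}$ from the antiprismatic families $D_{qd},S_{2q}$. Organizing the $24$ cases in a single table, with columns recording the order, the Schoenflies name, and the answers to ``$-\mathrm{Id}\in \mbox{Dual}(G)$?'' and ``$-\mathrm{Id}\in \mbox{Aut}(G)$?'', should make the verification routine and guard against the parity errors that are easy to commit in Coxeter's notation.
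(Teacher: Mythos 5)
Your proposal is correct and follows essentially the same route as the paper: reduce to the condition that the antipodal map $-\mathrm{Id}$ lies in $\mbox{Dual}(G)$ but not in $\mbox{Aut}(G)$ (via the realization theorem of Servatius--Servatius and the antipodality theorem of \cite{BMPRA}), determine the inversion content of each Coxeter group, and run the finite check over the 24 pairings. The paper's Lemma \ref{lem1} and Table \ref{tab1} carry out exactly the inventory and case-by-case verification you describe, the only cosmetic difference being that you identify the groups by their Schoenflies names while the paper argues the inversion-content facts geometrically.
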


The self-dual pairings given in Theorem \ref{thm1} will be called {\em antipodal pairings}. We shall also present a short description of the {\em orbifold} associated to each antipodal pairing. We do so by explicitly giving the {\em fundamental region} for each group and its generators. 
\smallskip

In Section \ref{sec:construction}, we study the {\em doodle} (the map restricted to its fundamental region) of two special families of strongly involutive polyhedra: the {\em $q$-multi hyperwheels} (a new family of strongly involutive polyhedra) and the {\em $q$-multi wheels}.
We explain how these families can be constructed from their doodle via the action of the corresponding orbifold. We finally end with some concluding remarks.

\section{Antipodal pairings}\label{sec2}

\subsection{Spherical isometries} We quickly recall some notions on the group of spherical isometries (we refer the reader to \cite{coxeter} for further details and where our notation is taken from). An {\em isometry} of $\mathbb{S}^2$ is either a rotation or a reflection or a composition of these two. Let $\mbox{Isom}(\stw)$ be the group of isometries of the 2-sphere. We remark that any orientation re\-ver\-sing isometry is either a reflection or a product of three reflections. Also, it can be checked that $\mbox{Isom}(\stw)$ is generated by only reflections. Moreover, any isometry is a product of at most three reflections:
\smallskip

$\bullet$ [one reflection] Let $H$ be a plane passing through the center of $\stw$. The reflection $\gamma$ w.r.t.\ $H$ is an involutive isometry and therefore of order 2. This group is denoted by $[1]$.
\smallskip

$\bullet$ [two reflections] Let $H_1$ and $H_2$ be two planes with angle $\angle (H_1,H_2)=\frac{\pi}{q}$. The reflection $\gamma_1$ and $\gamma_2$ w.r.t. these planes generate the group $[q]$ (isomorphic to the dihedral group $D_q$) having presentation $\gamma_1^2=\gamma_2^2=(\gamma_1\gamma_2)^q=id$.
\smallskip

$\bullet$ [three reflections] Let $H_1, H_2$ and $H_3$ be three planes inducing a spheric triangle with angles $\frac{\pi}{p}, \frac{\pi}{q}$ and $\frac{\pi}{r}$, denoted by $[p,q]$. 

\begin{figure}[H]
\centering
\includegraphics[width=0.7\linewidth]{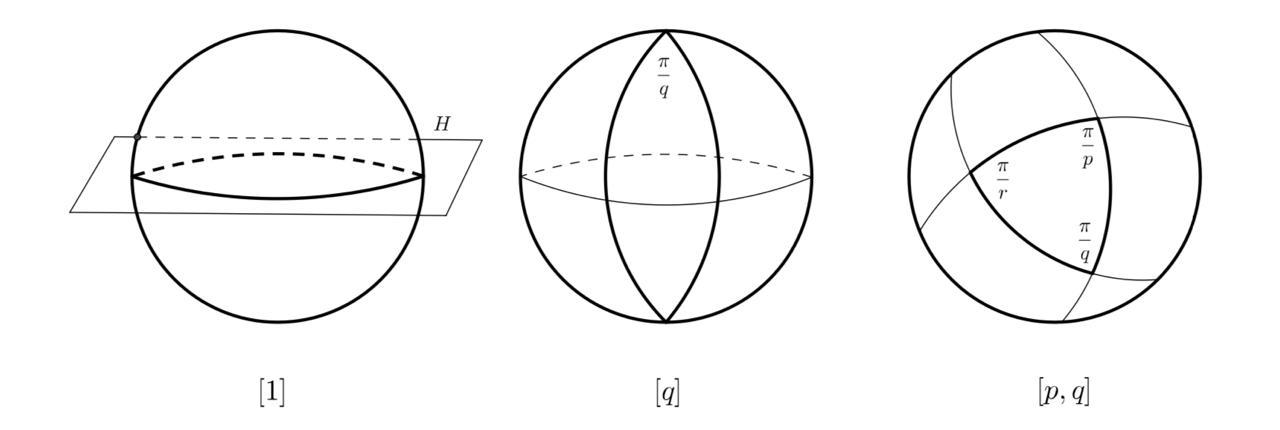}
\caption{Reflection groups.}
\label{fig22}
\end{figure}

It is known that the sum of the angles of a spheric triangle is strictly larger that $\pi$, therefore $\frac{1}{p}+\frac{1}{q}+\frac{1}{r}>1$ implying that $\min\{p,q,r\}=2$. We take $r=2$, obtaining that $\frac{1}{p}+\frac{1}{q}>\frac{1}{2}$
yielding to $(p-2)(q-2)<4$. By analyzing the latter, it can be found that the possible cases are : 

- $[3,3]$ (corresponding to the tetrahedron),

- $[3,4]$ (corresponding to the cube and octahedron),  

- $[3,5]$ (corresponding to the dodecahedron and icosahedron),

- $[2,q]$ (corresponding to a {\em diamond} polyhedron which dual is a {\em prism} with a $q$-polygon as a base).
\smallskip

The group $[p,q]$ or $[q,p]$ is defined by $\gamma_1^2=\gamma_2^2=\gamma_3^2=(\gamma_1\gamma_2)^p=(\gamma_2\gamma_3)^q=(\gamma_1\gamma_3)^2=id$ having as a {\em fundamental region} the spherical triangle with angles $\frac{\pi}{p},\frac{\pi}{q}$ and $\frac{\pi}{2}$. Each of these groups has a rotational group consisting only of the possible rotations generated by pairs of reflections:
\smallskip

- The rotational subgroup of $[q]$, consisting of the powers of $\gamma_1\gamma_2$, is denoted by $[q]^+$ (clearly $[1]^+$ is the trivial group),

- the rotational subgroup of $[p,q]$, generated by the rotations  $\gamma_1\gamma_2, \gamma_2\gamma_3$ and $\gamma_3\gamma_1$, is denoted by $[p,q]^+$ (these subgroups are of order 2), 

- the rotational subgroup of $[p,q]$ when $q$ is even, generated by $\gamma_1\gamma_2$ which is a rotation of order $p$ followed by the reflection $\gamma_3$, is denoted by $[p^+,q]$ or $[q,p^+]$ (these subgroups are of order 2),

- the rotational subgroup of $[2^+,q]$, generated by $\gamma_1\gamma_2\gamma_3$, is denoted by $[2^+,q^+]$ (which turns out to be a cyclic group of index 2)  generated by the rotatory reflection $\rho \gamma _3$.

\subsection{Detecting antipodal pairings} We first present the following lemma that identifies which groups contain the antipodal function.

\begin{lemma}\label{lem1} Let $\alpha :\mathbb{S}^2\rightarrow\mathbb{S}^2$ be the antipodal function. Then,
\begin{enumerate}
\item $\alpha \notin [q]$ for any positive integer $q$.
\item $\alpha \notin [q]^{+}$ for any positive integer $q$.
\item $\alpha \in [p,q]$ if and only if $p=3,q=4$ or $p=2$ and $q$ is even.
\item $\alpha \notin [p,q]^{+}$ for any positive integers $p$ and $q$.
\item $\alpha \in [p^{+},q]$ or $[q,p^{+}]$ if and only if $p=3,q=4$ or $p=2$, $q$ is even and $\frac{q}{2}$ is odd.
\item $\alpha \in [p^{+},q^{+}]$ if and only if $p=2$, $q$ is even and $\frac{q}{2}$ is odd.
\end{enumerate}
\end{lemma}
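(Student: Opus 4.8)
The plan is to treat $\alpha=-I$ as an element of $\mathrm{Isom}(\mathbb{S}^2)=O(3)$ and to exploit its three defining features: it is central, it is orientation-reversing ($\det(-I)=-1$), and it is the \emph{unique} fixed-point-free improper involution of $\mathbb{S}^2$ (any improper involution in $O(3)$ has eigenvalues $(1,1,-1)$ or $(-1,-1,-1)$, i.e.\ is a reflection or $-I$, and only the latter has empty fixed locus). The orientation feature disposes of the two rotational families at once: since $[q]^{+}$ and $[p,q]^{+}$ consist solely of rotations they lie in $SO(3)$ and cannot contain the determinant-$(-1)$ map $\alpha$, which gives $(2)$ and $(4)$. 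For $(1)$, the dihedral group $[q]=\langle\gamma_1,\gamma_2\rangle$ fixes the axis $\ell=H_1\cap H_2$ pointwise (each $\gamma_i$ fixes its own mirror, hence $\ell$, and the rotations are about $\ell$), so every element of $[q]$ has $+1$ as an eigenvalue whereas $\alpha$ has not; thus $\alpha\notin[q]$.

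For the families built on $[2,q]$ I would fix coordinates with the $q$-fold axis vertical, writing $\gamma_1=\sigma_h=\mathrm{diag}(1,1,-1)$ for the equatorial reflection and taking $\gamma_2,\gamma_3$ to be reflections in vertical planes meeting at angle $\pi/q$, so that $\gamma_2\gamma_3=R_z(2\pi/q)$ is the principal rotation. The single identity driving everything is $-I=\sigma_h\,R_z(\pi)$. Hence in $[2,q]$ one has $\alpha=\gamma_1(\gamma_2\gamma_3)^{q/2}$ precisely when $R_z(\pi)=(\gamma_2\gamma_3)^{q/2}$ is available, i.e.\ when $q$ is even; for $q$ odd the orientation-reversing elements of $[2,q]$ are the rotatory reflections $\sigma_h R_z(2\pi k/q)$ and the vertical reflections, none of which equals $-I$, settling the $[2,q]$ part of $(3)$.

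The $+$-families are handled through the rotatory reflection $s=\gamma_1\gamma_2\gamma_3=\sigma_h R_z(-2\pi/q)$ that generates the cyclic group $[2^{+},q^{+}]=\langle s\rangle$. Since $s^{k}$ is a proper rotation for even $k$ and the orientation-reversing element $\sigma_h R_z(-2\pi k/q)$ for odd $k$, and since $-I=\sigma_h R_z(\pi)$ is orientation-reversing, the equation $s^{k}=-I$ forces $k$ odd together with $R_z(-2\pi k/q)=R_z(\pi)$, that is $2k\equiv q\pmod{2q}$ with $k$ odd. A short parity count shows this is solvable exactly when $q$ is even and $q/2$ is odd, which is $(6)$. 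For $(5)$ one observes that $[2^{+},q]=\langle s,\gamma_3\rangle$ adds only the coset $\gamma_3\langle s\rangle$, whose orientation-reversing members are vertical reflections (each fixing a plane) and so are never $-I$; therefore $\alpha\in[2^{+},q]$ iff $\alpha\in\langle s\rangle$, reproducing the same condition $q$ even, $q/2$ odd.

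It remains to treat the polyhedral members, namely $p=3$. Here I would invoke the central-inversion structure of the finite reflection groups: $[3,3]=T_d\cong S_4$ has trivial center, so $\alpha\notin[3,3]$ (equivalently, the regular tetrahedron is not centrally symmetric), whereas $[3,4]=O_h\cong S_4\times\{\pm I\}$ and the pyritohedral group $[3^{+},4]=T_h\cong A_4\times\{\pm I\}$ both contain the central inversion, giving the cases $p=3,q=4$ in $(3)$ and $(5)$. (The only remaining spherical triangle group $[3,5]=I_h\cong A_5\times\{\pm I\}$ also contains $-I$, but it never occurs as a $\mathrm{Dual}(G)$ of a self-dual map and so lies outside the present classification.) I expect the main obstacle to be the bookkeeping in the $[2^{+},\cdot]$ families: getting the ``$q/2$ odd'' condition right hinges on tracking the parity of $k$ simultaneously with the congruence $2k\equiv q\pmod{2q}$, and on checking that $-I$ cannot enter through the reflection coset in $(5)$ rather than through the cyclic rotatory-reflection part.
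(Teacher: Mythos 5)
Your proof is correct and follows essentially the same route as the paper's: fixed-point-freeness for (1), orientation-reversal for (2) and (4), central symmetry for (3), and the identity $(\rho\gamma)^{q/2}=\alpha$ for the rotatory reflection together with the parity condition on $q/2$ for (5) and (6) --- your write-up is in fact tighter, since you carry out the general congruence/parity count that the paper only illustrates for $[2^{+},6]$ and you explicitly rule out $\alpha$ entering through the reflection coset in (5). Your parenthetical on $[3,5]$ is also well taken: $I_h$ does contain $-I$, so item (3) as literally stated fails for $[3,5]$ and is rescued only because that group never occurs among the 24 self-dual pairings, a caveat the paper's proof (which asserts the cube and octahedron are the only centrally symmetric Platonic solids ``on the list'') leaves implicit.
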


\begin{proof}
Since $\alpha$ is an isometry without fixed points then (1) is verified. Moreover, since $\alpha$ inverts orientation then (2) and (4) hold. Notice that (2) also follows since $[q]^+$ is a subgroup of $[q]$.
We can establish (3) by observing that any prism has central symmetry if and only if it has a polygon with an even number of sides as a base and similarly we see that the cube and the octahedron are the only platonic solids on the list with central symmetry.

In order to illustrate the proof of (5) and (6), let us see that $\alpha \in [2^{+},6]$. The group $[2,6]$ is generated by the reflections $\gamma _1,\gamma_2$ and $\gamma_3$ such that $\gamma_1^2=\gamma_2^2=\gamma_3^2=(\gamma_1\gamma_2)^2=(\gamma_3\gamma_1)^2=(\gamma_2\gamma_3)^2=id$. The subgroup $[2^{+},6]$ is generated by the reflection $\gamma=\gamma_1$ and the rotation $\rho=\gamma_2\gamma_3$. The element $\rho\gamma$ is a \textit{rotatory reflection} or a \textit{step}, satisfying $(\rho\gamma)^6=id$ and $(\rho\gamma)^{3}=\alpha$ (See Figure \ref{zigzag}).
\begin{figure}[h]
\centering
\includegraphics[scale=0.3]{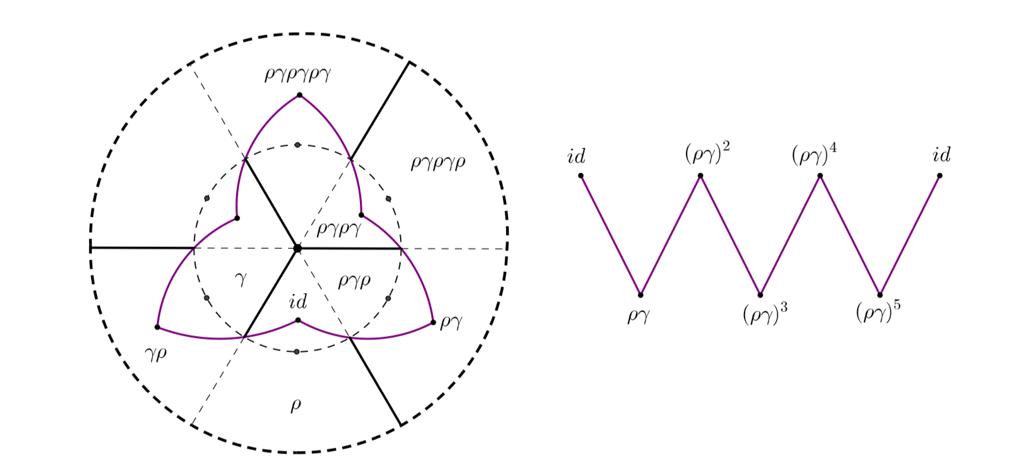}
\caption{Zig-zag described by $(\rho\gamma)^i$ in $[2^{+},6]$. The exterior dotted circle represents one of the poles, the central black circle the other pole and the middle dotted circle the equator.}
\label{zigzag}
\end{figure}

In the general case $[2^{+},q]$, if $q$ is even and $\frac{q}{2}$ is odd, then we have that $(\rho\gamma)^q=id$ and $(\rho\gamma)^{\frac{q}{2}}=\alpha$. Conversely, if $(\rho\gamma)^{\frac{q}{2}}=\alpha$, then $q$ is even and $\frac{q}{2}$ must be odd in order to express $\alpha$ as a product of generators with an odd number of reflections, otherwise $\alpha$ would preserve orientation, which is not the case. From this fact we can conclude also 6 since $[2^{+},q^{+}]$ is precisely the cyclic group generated by the step $\rho\gamma$. Finally, by considering (3), we just need to check the subgroup $[3^{+},4]$, which is generated by a reflection $\gamma$ and a rotation $\rho$. In this case, it can be verified that $\alpha=(\rho\gamma)^3$.
\end{proof}
\smallskip

We may now prove Theorem \ref{thm1}.
\smallskip

{\em Proof of Theorem \ref{thm1}.} We may use Lemma \ref{lem1} to identify those self-dual pairings $\mbox{Dual}(G) \rhd \mbox{Aut}(G)$ such that $\alpha \in \mbox{Dual}(G)$ and $\alpha \notin \mbox{Aut}(G)$. In Table \ref{tab1} we examine each case. \hfill $\square$

\begin{table}[H]
\begin{center}
\begin{tabular}{|l|c|c|}
\hline
\text{Dual$(G)\rhd$Aut$(G)$} & $\alpha\in$ Dual$(G$)? & $\alpha\in $ Aut$(G)$?\\
\hline
\hline
$\mathbf{[2,q]\rhd [q]}$, $q$-even & $\alpha\in[2,q]$ (3) &  $\alpha\not\in[q]$ (1)\\
\hline
$[2,q]^+\rhd [q]^+$ & $\alpha\not\in [2,q]^+$ (4) & $\alpha\not\in[q]^+$ (2)\\
\hline 
$\mathbf{[2^+,2q]\rhd [2q]}$, $q$-odd & $\alpha\in[2^+,2q]$ (5) & $\alpha\not\in[2q]$ (1) \\
\hline
$\mathbf{[2,q^+]\rhd [q]^+}$, $q$-even & $\alpha\in[2,q^+]$ (5)\ & $\alpha\not\in[q]^+$ (2)\\
\hline
$\mathbf{[2^+,2q^+]\rhd [2q]^+}$, $q$-odd & $\alpha\in[2^+,2q^+]$ (6) & $\alpha\not\in[2q]^+$ (2) \\
\hline
 $[2]\rhd [1]$ & $\alpha\not\in[2]$ (1) & $\alpha\not\in[1]$ (1) \\
\hline 
 $[2]\rhd [2]^+$ & $\alpha\not\in[2]$ (1) & $\alpha\not\in[2]^+$ (2) \\
\hline 
 $[4]\rhd [2]$ & $\alpha\not\in[4]$ (1)  & $\alpha\not\in[2]$ (1) \\
\hline
 $[2]^+\rhd [1]^+$ & $\alpha\not\in[2]^+$ (2) & $\alpha\not\in[1]^+$ (1)\\
 \hline
 $[4]^+\rhd [2]^+$ & $\alpha\not\in[4]^+$ (2) & $\alpha\not\in[2]^+$ (2) \\
 \hline
 $\mathbf{[2,2]\rhd [2,2]^+}$ & $\alpha\in[2,2]$ (3) & $\alpha\not\in[2,2]^+$ (4) \\
\hline
 $\mathbf{[2,4]\rhd [2^+,4]}$ &  $\alpha\in[2,4]$ (3) & $\alpha\not\in[2^+,4]$ (5)\\
\hline
 $[2,2]\rhd [2,2^+]$ &  $\alpha\in[2,2]$ (3) & $\alpha\in[2,2^+]$ (5) \\
 \hline
 $[2,4]\rhd [2,2]$ & $\alpha\in[2,4]$ (3) & $\alpha\in[2,2]$ (3) \\
 \hline
 $[2,4]^+\rhd [2,2]^+$ &  $\alpha\not\in[2,4]^+$ (4) & $\alpha\not\in[2,2]^+$ (4) \\
 \hline
 $[2^+,4]\rhd [2,2]^+$ &  $\alpha\not\in[2^+,4]$ (5) & $\alpha\not\in[2,2]^+$ (4) \\
\hline
 $[2^+,4]\rhd [2^+,4^+]$ & $\alpha\not\in[2^+,4]$ (5) & $\alpha\not\in[2^+,4^+]$ (6)\\
 \hline
 $\mathbf{[2,4^+]\rhd [2^+,4^+]}$ & $\alpha\in[2,4^+]$ (5) & $\alpha\not\in[2^+,4^+]$ (6) \\
 \hline
 $[2,2^+]\rhd [2^+,2^+]$ & $\alpha\in[2,2^+]$  (5) & $\alpha\in[2^+,2^+]$ (6) \\
 \hline
 $[2,4^+]\rhd [2,2^+]$ & $\alpha\in[2,4^+]$ (5) & $\alpha\in[2,2^+]$ (5) \\
 \hline
 $\mathbf{[2,2^+]\rhd [1]}$ & $\alpha\in[2,2^+]$ (5) & $\alpha\not\in[1]$ (1)\\
 \hline
 $\mathbf{[3,4]\rhd [3,3]}$ &  $\alpha\in[3,4]$ (3) & $\alpha\not\in[3,3]$ (3)\\
 \hline
 $[3,4]^+\rhd [3,3]^+$ & $\alpha\not\in[3,4]^+$ (4) & $\alpha\not\in[3,3]^+$ (4)\\
 \hline
$\mathbf{[3^+,4]\rhd [3,3]^+}$ & $\alpha\in[3^+,4]$ (5) & $\alpha\not\in[3,3]^+$ (4)\\
 \hline
\end{tabular}
\end{center}
\caption{Detecting if the antipodal map $\alpha$ belongs to $\mbox{Aut}(G)$ and/or $\mbox{Dual}(G)$. In parenthesis the condition of Lemma \ref{lem1} that is applied. In bold the self-dual pairings with $\alpha \in \mbox{Dual}(G)$ and $\alpha \notin \mbox{Aut}(G)$.}\label{tab1}
\end{table}

\subsection{Orbifolds} Given $\sigma\in\mbox{Dual}(G)$ and a point $p \in\stw$ we have that $\sigma\cdot p=\sigma(p)$,
that is, $\mbox{Dual}(G)$ acts as an evaluation function in $\stw$. 
The orbit of $R$  is the set of regions congruent to $R$ covering the sphere (the number of such regions is as many as the elements in $\mbox{Dual}(G)$. The {\em stabilizer} of $p$ consist of all the elements in $\mbox{Dual}(G)$ fixing $p$. An {\em orbifold} is defined as the fundamental region $R$ together with the points with nontrivial stabilizer.
\smallskip

Let $\Gamma \triangleleft \Delta$ be a self-duality pairing where $\Gamma$ and $\Delta$ are finite groups of isometries of the sphere, such that $\Gamma$ is an  index 2 subgroup of $\Delta$. In order to describe each orbifold we need fundamental regions $R_1$ and $R_2$ corresponding to the groups $\Gamma$ and $\Delta$, respectively as well as their corresponding generators. We observe that since $\Delta$ is an extension of index 2,  region $R_2$ is contained in $R_1$ and it will have half of its area. 
\smallskip

In what follows we describe the fundamental regions for each antipodal self-dual pairing. We have drawn with :

- a thick line the singular elements of $\Gamma$, i.e., sets of fixed points under the action of $\Gamma$ 

- a double line the singular elements of $\Delta$

- continuous lines denote reflection lines

- dotted lines delimit the regions without considering a reflection and

- points marked with a circle denote a center of rotation by the marked angle.

\begin{tabular}{cc}
\begin{minipage}{6cm}
$\mathbf{[q]\triangleleft [2,q]}$\\
$[q]=\langle \gamma _1, \gamma _2\rangle$ is a group generated by reflections $\gamma _1$ and $\gamma _2$ through the thick lines. The fundamental region $R_1$ is a bigon formed by these lines. $[2,q]=\langle \gamma _1,\gamma _2, \gamma _3\rangle$ is generated by reflections $\gamma _1, \gamma _2$ and $\gamma _3$ and its fundamental region $R_2$ is a triangle with angles $\frac{\pi}{q},\frac{\pi}{2},\frac{\pi}{2}$.
\end{minipage}
&
\begin{minipage}{11cm}
\raisebox{-\totalheight}{\includegraphics[width=.9\textwidth]{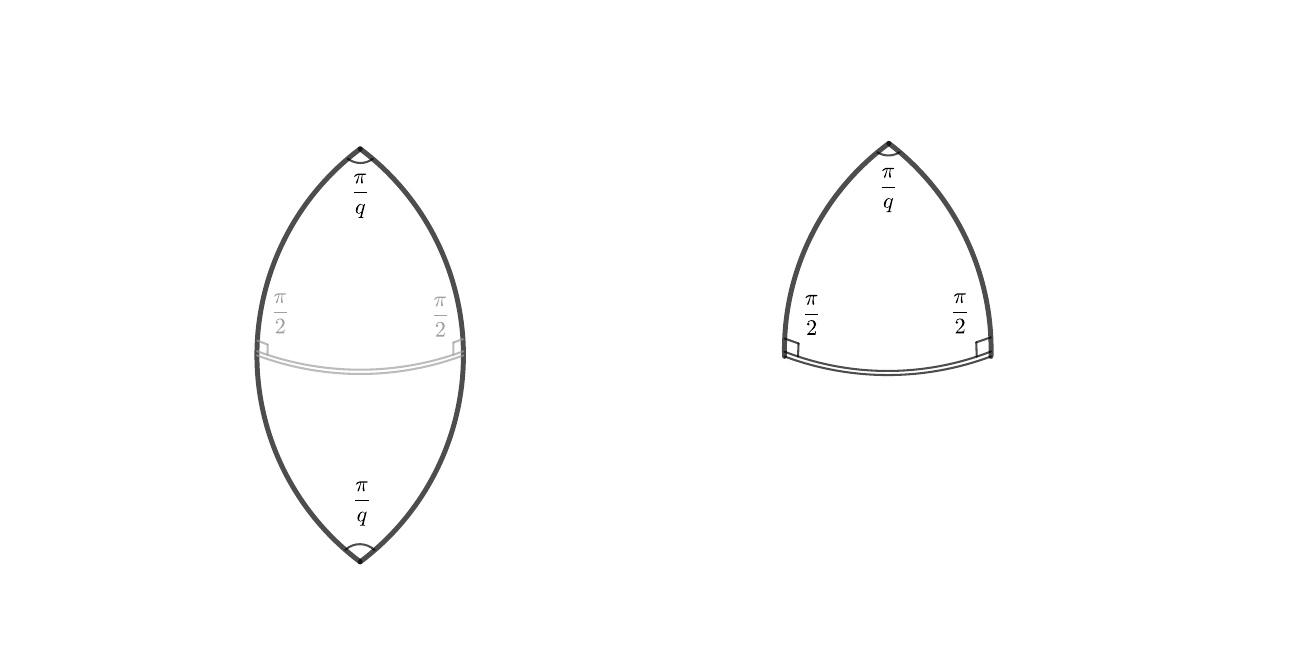}}
 \end{minipage}
\end{tabular}

 \begin{tabular}{cc}
\begin{minipage}{6cm}
$\mathbf{[q]^{+}\triangleleft [2,q^+]}$\\
The fundamental region $R_1$ is a bigon formed by lines with an angle $\frac{2\pi}{q}$ and $[q]^+=\langle \rho \rangle$, where $\rho$ is a rotation for $\frac{2\pi}{q}$. The extension $[2,q^+]=\langle \rho , \gamma \rangle$ , being $\gamma$ the reflection through the equator.
\end{minipage}
&
\begin{minipage}{11cm}
\raisebox{-\totalheight}{\includegraphics[width=.8\textwidth]{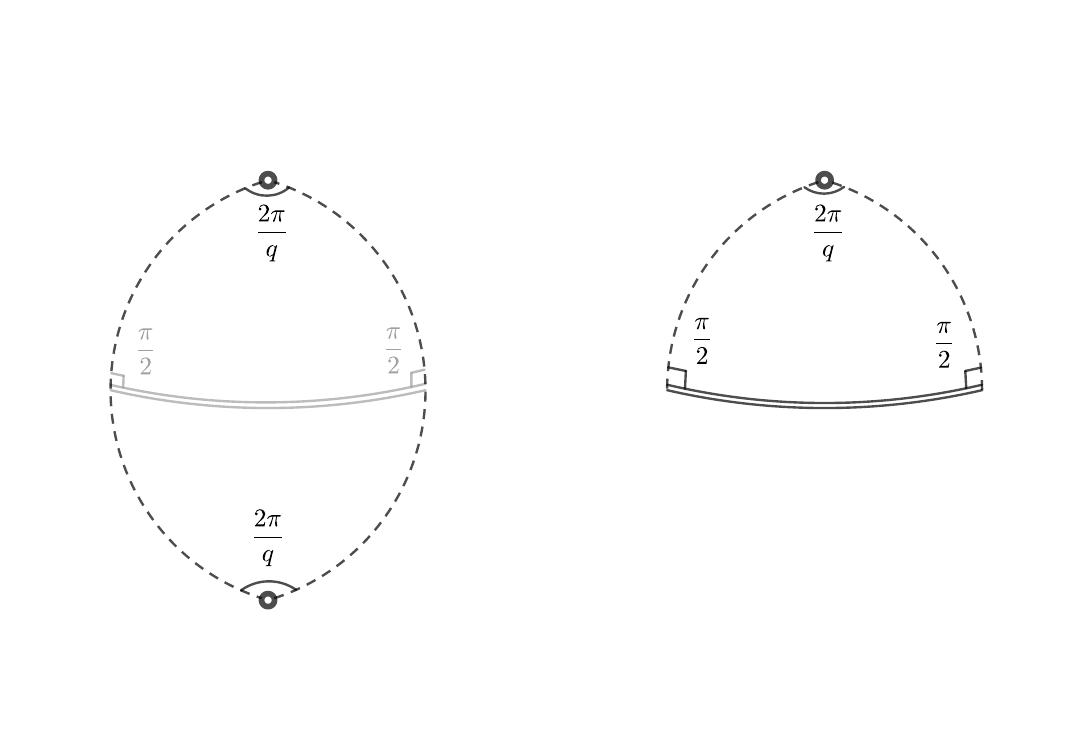}}
 \end{minipage}
\end{tabular}

\begin{tabular}{cc}
\begin{minipage}{6cm}
$\mathbf{[q]\triangleleft [2^+,2q]}$\\
The fundamental region $R_1$ is a bigon with angle $\frac{\pi}{q}$ and $[q]=\langle \gamma _1, \gamma _2\rangle$. The region $R_2$ is a triangle with angles $\frac{\pi}{q},\frac{\pi}{2},\frac{\pi}{2}$ and $[2^+,2q]=\langle \rho ,\gamma _1\rangle=\langle \rho ,\gamma _2\rangle$, where $\rho$ is a rotation by $\pi$.
\end{minipage}
&
\begin{minipage}{11cm}
\raisebox{-\totalheight}{\includegraphics[width=.8\textwidth]{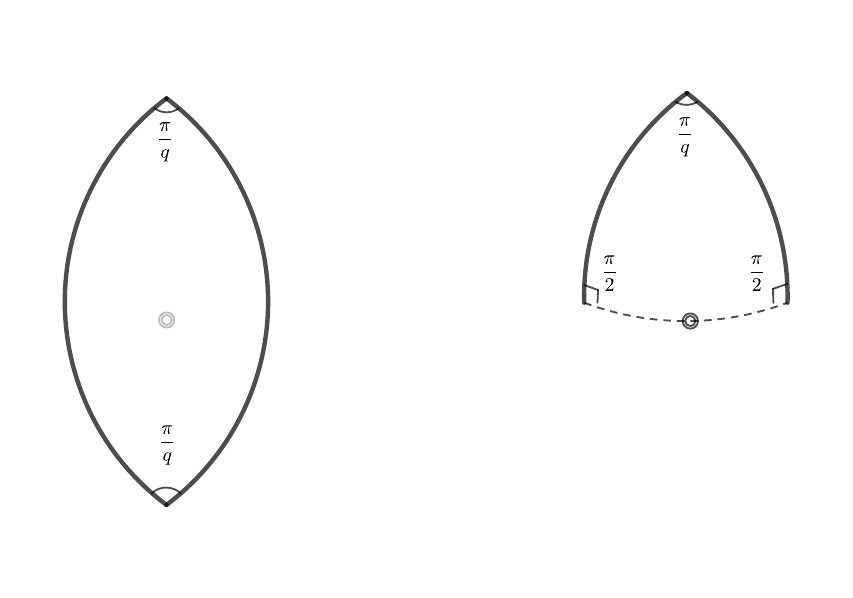}}
 \end{minipage}
\end{tabular}

\begin{tabular}{cc}
\begin{minipage}{6cm}
$\mathbf{[q]^+\triangleleft [2^+,2q^+]}$\\
The fundamental region $R_1$ consists of two triangles disposed in a way one can be obtained from the other by a rotatory reflection. This rotatory reflection is the composition of the reflection through the ``I'' mark and the rotation around the ``C''. Then $[q]^+=\langle \tau ^2 \rangle$ where $\tau$ is the rotatory reflection with order $2q$, and $[2^+,2q^+]=\langle \tau \rangle$.
\end{minipage}
&
\begin{minipage}{11cm}
\raisebox{-\totalheight}{\includegraphics[width=.8\textwidth]{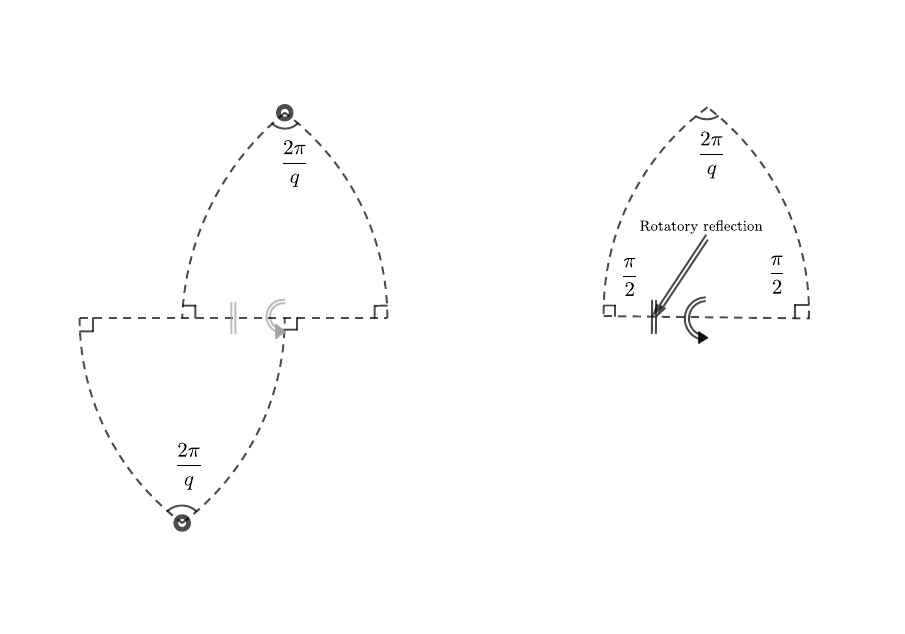}}
 \end{minipage}
\end{tabular}

\begin{tabular}{cc}
\begin{minipage}{6cm}
$\mathbf{[2,2]^+\triangleleft [2,2]}$\\
In this case $R_1$ is an equilateral triangle with internal angles of $\frac{2\pi}{3}$; $[2,2]^+=\langle \rho _{12},\rho _{23}, \rho _{31}\rangle$, $\rho _{ij}$ is a rotation by $\pi$ in the midpoint of each of the sides and $[2,2]=\langle \gamma _1,\gamma _2, \gamma _3\rangle$ where $\gamma _i$ is the reflection through a line $l_i$ and $\rho _{ij} = \gamma _j\gamma _i$.
\end{minipage}
&
\begin{minipage}{11cm}
\raisebox{-\totalheight}{\includegraphics[width=.8\textwidth]{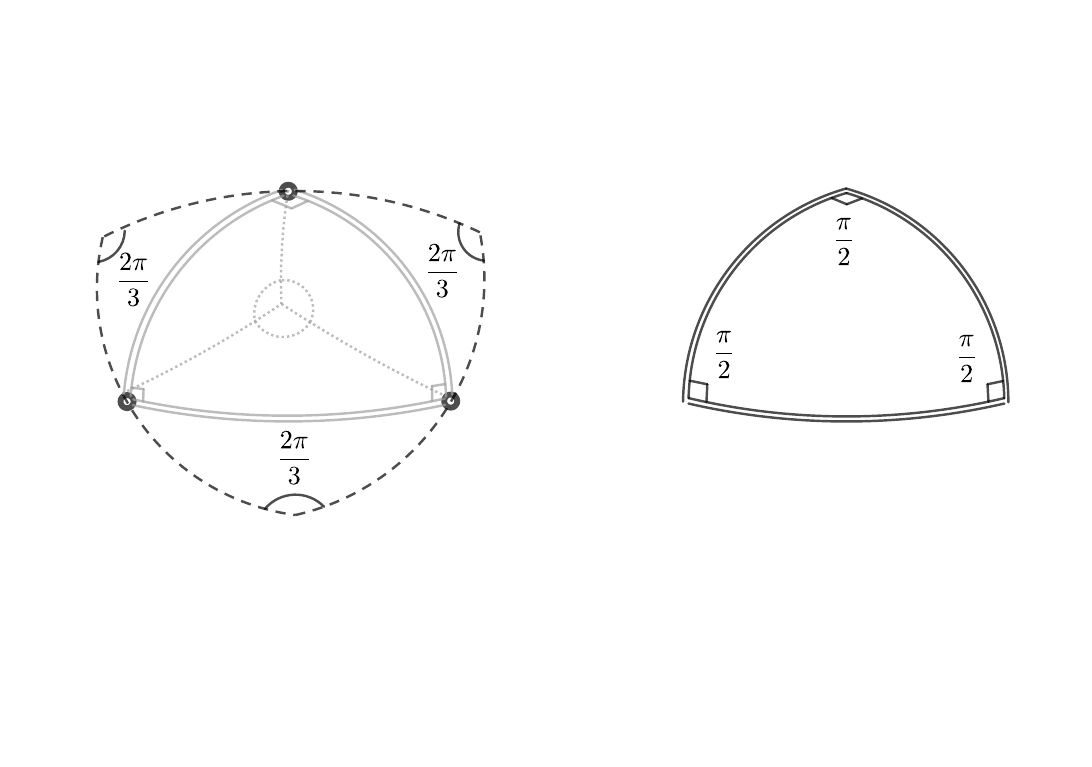}}
 \end{minipage}
\end{tabular}

 \begin{tabular}{cc}
\begin{minipage}{6cm}
$\mathbf{[2^+,4]\triangleleft [2,4]}$\\
$[2^+,4]=\langle \rho ,\gamma _1\rangle$ and $[2,4]=\langle \gamma _1,\gamma _2, \gamma _3\rangle$. Here $\gamma _i$ is a reflection through a line $l_i$ and $\rho =\gamma _3 \gamma _2$.
\end{minipage}
&
\begin{minipage}{11cm}
\raisebox{-\totalheight}{\includegraphics[width=.8\textwidth]{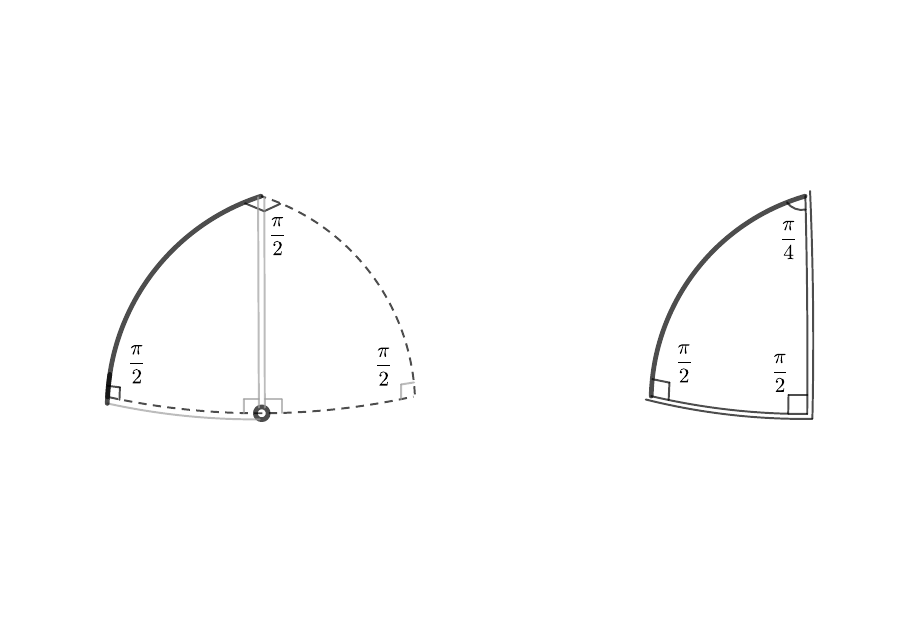}}
 \end{minipage}
\end{tabular}

 \begin{tabular}{cc}
\begin{minipage}{6cm}
$\mathbf{[2^+,4^+]\triangleleft [2,4^+]}$\\
$[2^+,4^+]=\langle \sigma \rangle$, where $\sigma$ is a rotatory reflection of order 4. $[2,4^+]=\langle \gamma ,\rho \rangle$, with $\gamma$ a reflection through the equator and $\rho$ a rotation by $\frac{\pi}{2}$ in the north pole. Note that $\sigma =  \rho \gamma$. 
\end{minipage}
&
\begin{minipage}{11cm}
\raisebox{-\totalheight}{\includegraphics[width=.8\textwidth]{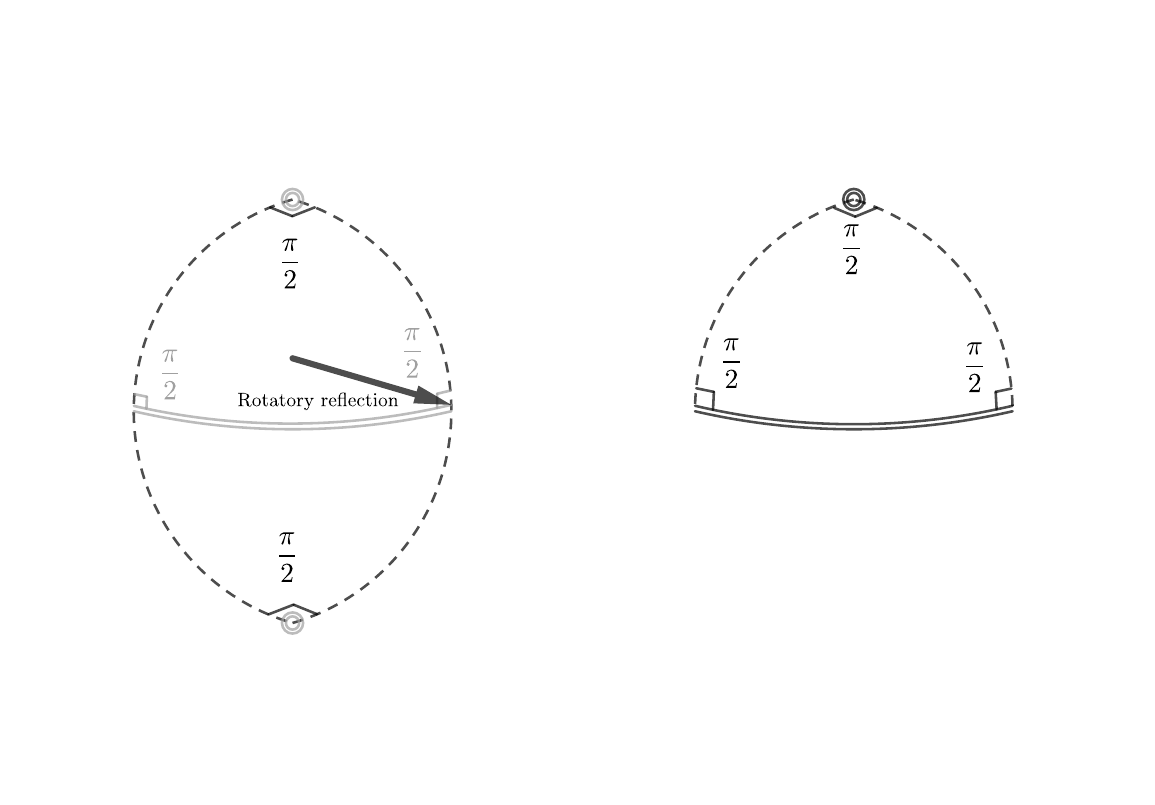}}
 \end{minipage}
\end{tabular}

 \begin{tabular}{cc}
\begin{minipage}{6cm}
$\mathbf{[1]\triangleleft [2,2^+]}$\\
In this case $R_1$ is one hemisphere and $[1]=\langle \gamma \rangle$, where $\gamma$ is the reflection through the equator, and $[2,2^+]=\langle \gamma ,\rho \rangle$, where $\rho$ is the rotation by $\pi$ through the marked point (north or south pole).
\end{minipage}
&
\begin{minipage}{11cm}
\raisebox{-\totalheight}{\includegraphics[width=.8\textwidth]{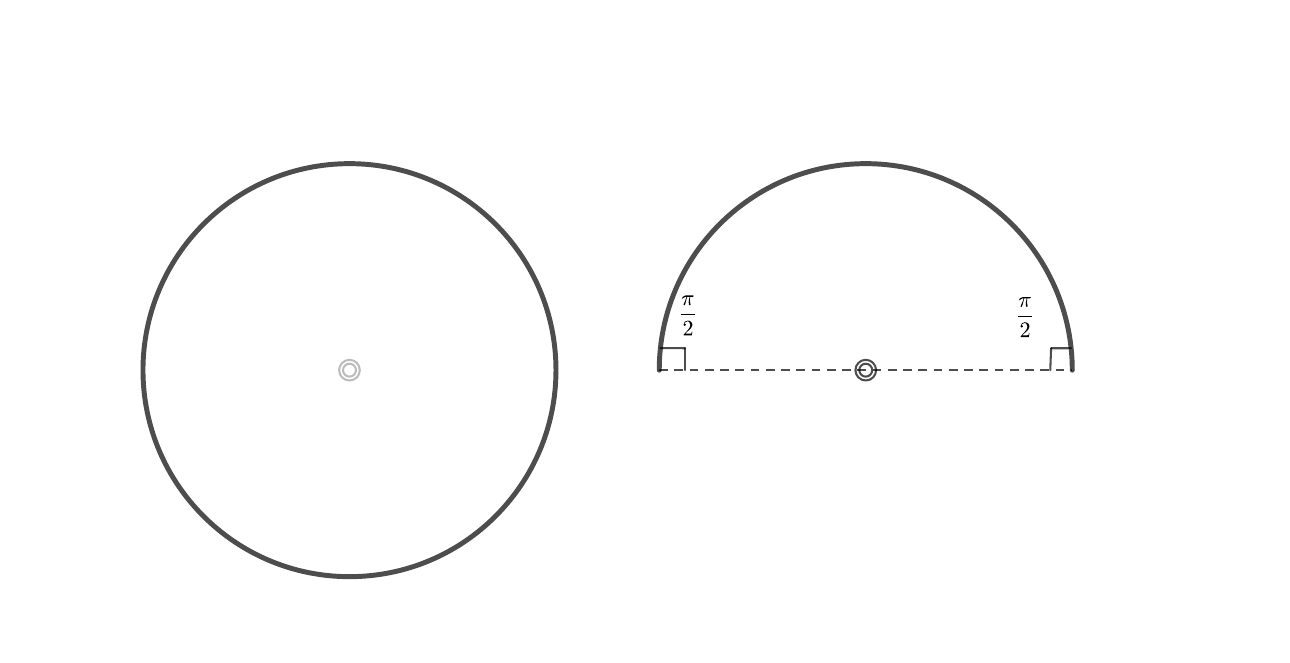}}
 \end{minipage}
\end{tabular}

 \begin{tabular}{cc}
\begin{minipage}{6cm}
$\mathbf{[3,3]\triangleleft [3,4]}$\\
$[3,3]=\langle \gamma _1,\gamma _2,\gamma _3\rangle$ and $[3,4]=\langle \gamma _1,\gamma _2,\gamma _4\rangle$. Here $\gamma _4$ is the reflection through the bisector or the right angle in the first triangle.
\end{minipage}
&
\begin{minipage}{11cm}
\raisebox{-\totalheight}{\includegraphics[width=.8\textwidth]{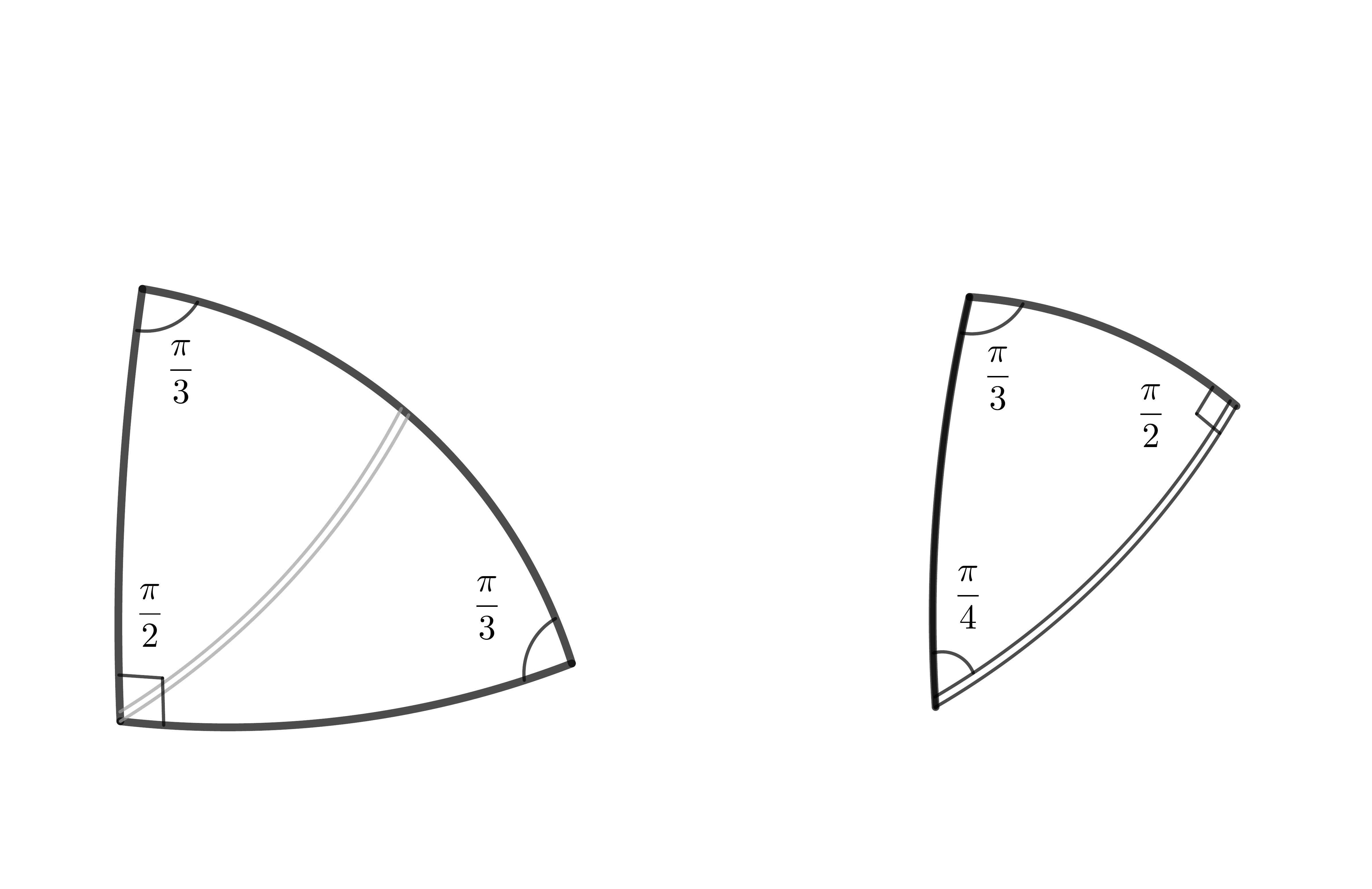}}
 \end{minipage}
\end{tabular}

 \begin{tabular}{cc}
\begin{minipage}{6cm}
$\mathbf{[3,3^+]\triangleleft [3^+,4]}$\\
$[3,3^+]=\langle \gamma _1,\rho _1\rangle$ and $[3^+,4]=\langle \gamma _2, \rho _2\rangle$. 
In this case, $R_1$ is a quadrilateral. $\gamma _1$ is a reflection through one of the thick lines (the other one can be obtained by rotation), and $\rho _1$ is the rotation around the thick circle. $R_2$ is a triangle and $\gamma _2$ is the reflection in the ``horizontal'' line and $\rho _2$ is the rotation around the circle with double line.
\end{minipage}
&
\begin{minipage}{11cm}
\raisebox{-\totalheight}{\includegraphics[width=.8\textwidth]{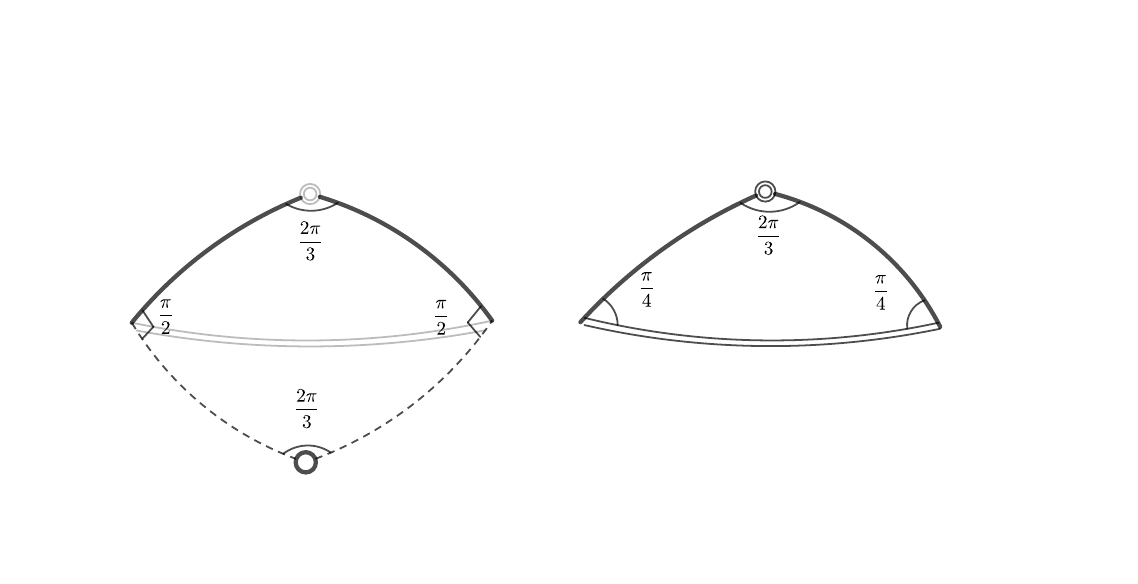}}
 \end{minipage}
\end{tabular}

\section{Constructing strongly involutive maps}\label{sec:construction}

Let us consider the orbifold of a map $G$ consisting of a fundamental region $R$ with the singular points. By construction, we have that the orbit of $R$ is a set of congruent regions covering the 2-sphere. Given a map $G$, we may consider the part of the map $G$ which is drawn in the fundamental region and does not have any other symmetry. We call this restriction the \textit{doodle} of $G$.
\smallskip

Next we describe how two involutive polyhedra are constructed from their doodle by means of the action of the duality group $\mbox{Dual}(G) \rhd \mbox{Aut}(G)$.

\subsection{The $q$-multi hyperwheel}

Let $q$ and $l$ be natural numbers. The {\em $q$-multi hyperwheel} with $l$ levels is the graph $\mathcal{O}_q^l $ consisting of the cycles:
\[(a_1^1,\ldots ,a_q^1),\cdots , (a_1^l,\ldots , a_q^l), (b_1^2,\ldots ,b_q^2), \cdots , (b_1^l, \ldots, b_q^l) \text{ and } (a_1^1,b_1^1, a_2^1, b_2^1,\ldots ,a_q^1,b_n^1)\]
as well as a vertex $c=b_i^{l+1}$ called the \textit{cusp} and the edges $a_i^j a_i^{j+1}$ and $b_i^jb_i^{j+1}$ for $j=1,\ldots ,l$; see Figure \ref{O43}.

\begin{figure}[h]
\hspace{3cm}
\includegraphics[scale=0.11]{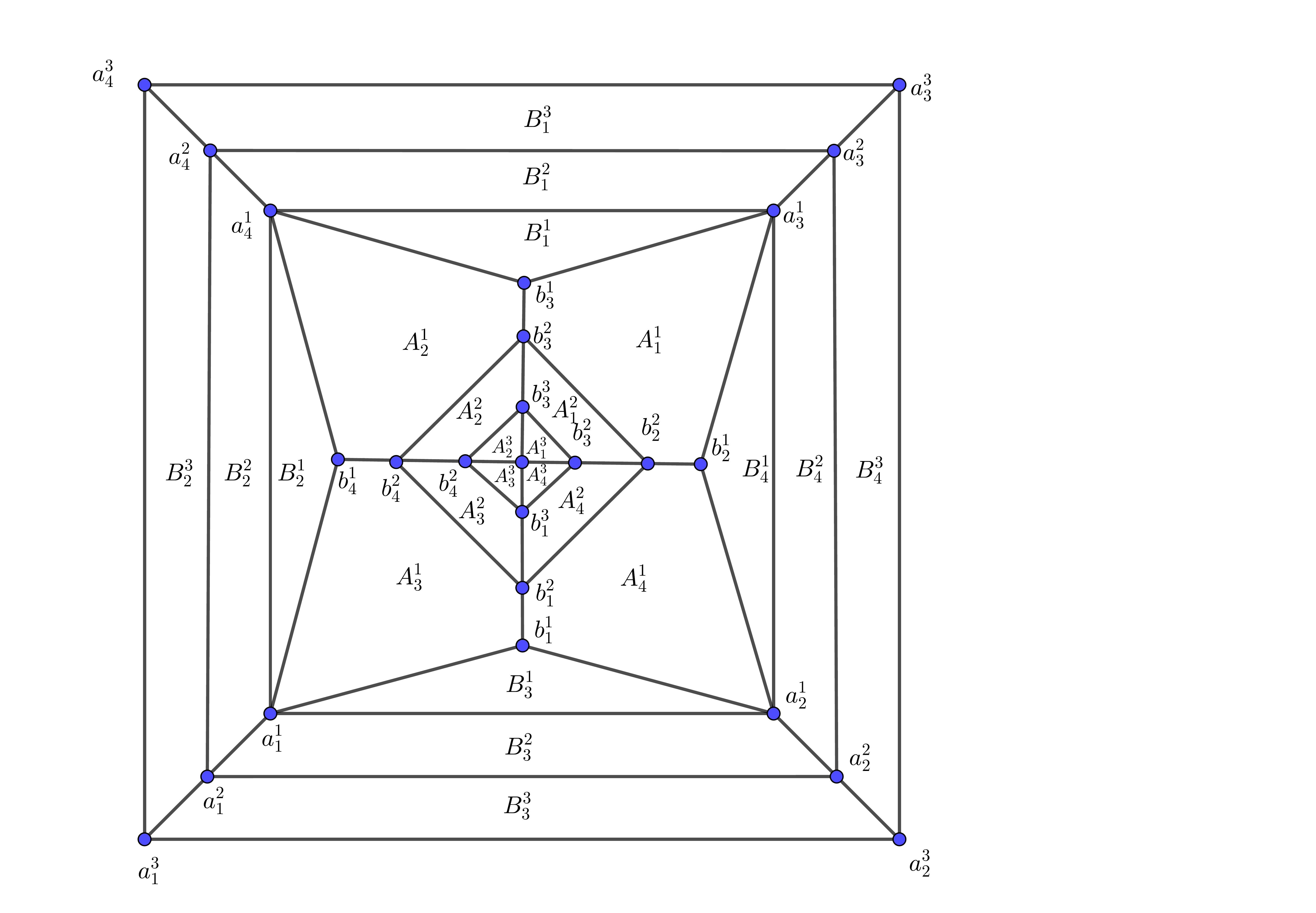}
\caption{The 4-multi hyperwheel with 3 levels $\mathcal{O}_4^3$.}
\label{O43}
\end{figure}

If $q=2k\geq 4$ then $\mathcal{O}_q^l$ is a strongly involutive self-dual polyhedron. Indeed, let us check this for $\mathcal{O}_q^l$ where the strong involution $\alpha$ is given as follows:
$$\begin{array}{lll}
\alpha (a_i^l)&=A_i^l &=(b_{i+k}^1b_{i+k-1}^1c),\\
\alpha (a_i^j)&=A_i^j &= (b_{i+k}^jb_{i+k-1}^jb_{i+k-1}^{j+1}b_{i+k}^{j+1}), \ \text{ for each } j=2,\ldots ,l-1,\\
\alpha (a_i^1)&=A_i^1 &= (b_{i+k}^2b_{i+k-1}^2b_{i+k-1}^1a_{i+k}^1b_{i+k}^1),\\
\alpha (b_i^1)&=B_i^1 &= (a_{i+k}^1b_{i+k}^1b_{i+k-1}^1),\\
\alpha (b_i^j)&=B_i^j &= (a_{i+k}^{j-1}a_{i+k-1}^ja_{i+k-1}^{j}a_{i+k-1}^{j-1}), \ \text{ for each } j=2,\ldots ,l,\\
\alpha (c)&=C &=(a_1^l\cdots a_q^l).\\
\end{array}$$

Note that $\alpha$, as duality isomorphism, associates the vertex set of the dual face to each point $p$, while, as an isometry of the sphere, it associates the antipodal point $-p$ to each point $p$.
\smallskip

We may now construct the 4-multi hyperwheel $G=\mathcal{O}_4^1$ which is just the 4-hyperwheel as illustrated in Figure \ref{fig1} (right) by means of the action of its antipodal pairing. 
Figure \ref{O4} gives the fundamental region $R$ and its doodle. This doodle can generate a family of self-dual polyhedra whose pairing is $[q] \triangleleft [2,q]$.

\begin{figure}[h]
\vspace{-1cm}
\centering
\includegraphics[scale=0.7]{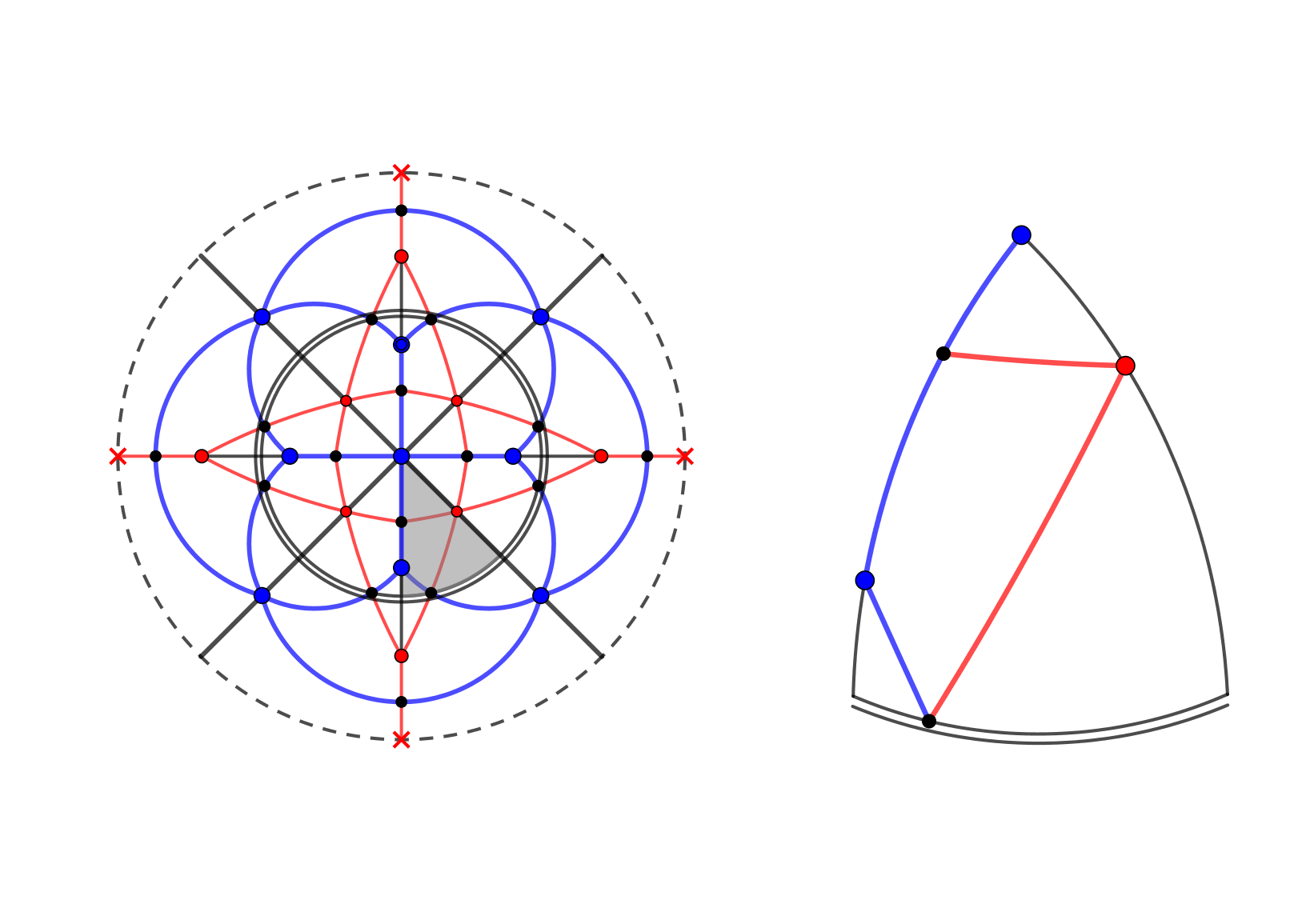}
\caption{(Left) A fundamental region (shaded) of $[4]\triangleleft [2,4]$. (Right) The doodle of the 4-hyperwheel.}
\label{O4}
\end{figure}

The external circle with dotted line represents the north pole, the circle with double line represents the equator and the central point is the south pole. The graphs $G$ and $G^{*}$ appear in blue and red, respectively and there is a black vertex in each intersection of an edge in $G$ with an edge in $G^{*}$. The join of these graphs (including the black vertices) is the graph of squares $G_{\square}$ whose all faces are quadrilaterals of the form $(vafb)$ for some $v\in V(G), f\in V(G^{*})$ and $a,b\in E(G)=E(G^{*})$.

\begin{rem}
Let $\phi$ be any element in $\mbox{Dual}(G)$. We have that $\phi$ preserves squares. Furthermore, $\phi \in \mbox{Aut}(G)$ if and only if $\phi$ is \textit{color preserving}; $\phi \in \mbox{Iso}(G)$ if and only if $\phi$ is \textit{color reversing}. In any case, black vertices are mapped to black vertices.
\end{rem}

We suppose that $\phi$ is the reflection in a spherical line $l$ which intersects the square $(vafb)$. If $\phi \in \mbox{Iso}(G)$ then $l$ must pass through $a$ and $b$ and must interchange $v$ and $f$. In this case, the reflection in the equator is a duality isomorphism since it interchanges vertices blue and red. Each automorphism can be obtained as a composition of reflections in four lines through the poles. The reflections through these lines generates the group $[4]$ and each one of them works as an automorphism, so we can write $\mbox{Aut}(G)=[4]$. The group $\mbox{Dual}(G)$ is $[2,4]$ and it can be obtained by adding the reflection through the equator. In this way, $\mbox{Dual}(G)$ is generated by the reflections in three planes $H_1,H_2$ and $H_3$. Planes $H_1$ and $H_2$ intersect with angle $\frac{\pi}{4}$ and we can assume they are vertical planes through the poles, while $H_3$ is the equatorial plane and it forms angles of $\frac{\pi}{2}$ with each $H_1,H_2$.  The fundamental region is the spherical triangle with angles $\frac{\pi}{2},\frac{\pi}{2},\frac{\pi}{4}$. The self-dual pairing for this graph is $[4]\triangleleft [2,4]$ and it is determined by this triangle with three isometries: two reflections in $\mbox{Aut}(G)$ (black lines) and one in $\mbox{Iso}(G)$ (double line).

\begin{figure}[h]
\centering
\includegraphics[scale=0.8]{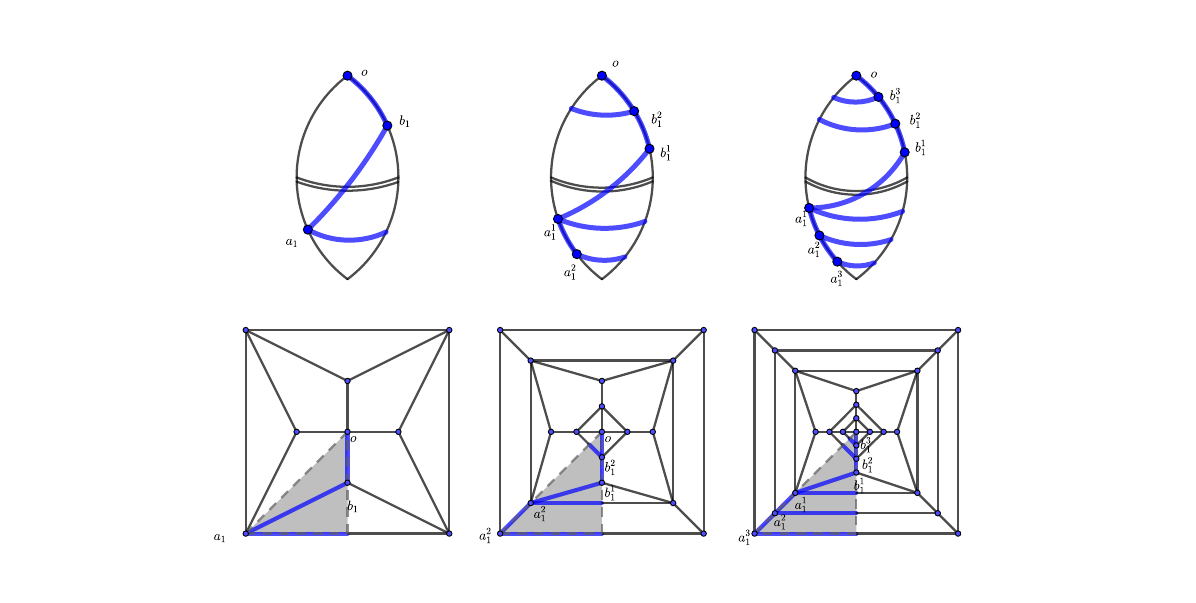}
\caption{The doodle on a fundamental region $R$ of $\mathcal{O}_q^l$ and the resulting graphs for $q=4$.}
\label{doodleO_ql}
\end{figure}

The above construction can be extended for each even $q$ and any $l$: For $q=4$ and $l=1$ we have the doodle of  $G=\mathcal{O}_4^1$ (Figure \ref{O4}). Then for $l=2$ and $3$ we have the doodles and the graphs in Figure \ref{doodleO_ql}. Here the doodle is drawn on the region between two planes forming an angle $\frac{\pi}{q}$: the fundamental region of the group $[q]$.
\vspace{0.2cm}

In Figure \ref{O_43}, we consider the embedding of both: the graph in blue and its dual graph in red. Here the reflection through the double line interchanges colors blue and red since it is a duality isomorphism of $[q]\triangleleft [2,q]$. If we let the group $[q]$ act on the sphere, we will have a generalization of the $q$-multi hyperwheel. Figure \ref{O_43} illustrates the case $q=4$ and $l=3$.

\begin{figure}[h]
\begin{center}
\includegraphics[scale=0.32]{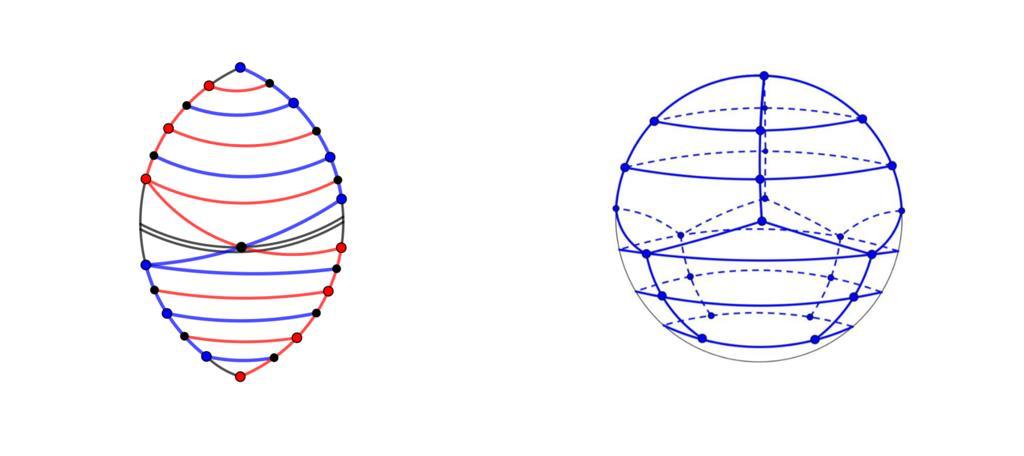}
\end{center}
\caption{(Left) The doodle and fundamental region $R$ of $\mathcal{O}_4^3$ (Right) Resulting graph embedded on $\mathbb{S}^2$.}
\label{O_43}
\end{figure}

\subsection{The $q$-multi wheel} 

Let $q$ and $l$ be natural numbers. The {\em $q$-multi wheel} with $l$ levels is the graph $\mathcal{P}_q^l$ consisting of the $l$ cycles 
$$(a_1^1,\ldots ,a_q^1), \cdots , (a_1^l,\ldots ,a_q^l),$$ a central vertex $c= a_i^0$ called the \textit{cusp} and edges $a_i^ja_i^{j+1}$. Figure \ref{P3} illustrates  $\mathcal{P}_5^3$.

\begin{figure}[h]
\centering
\includegraphics[scale=0.8]{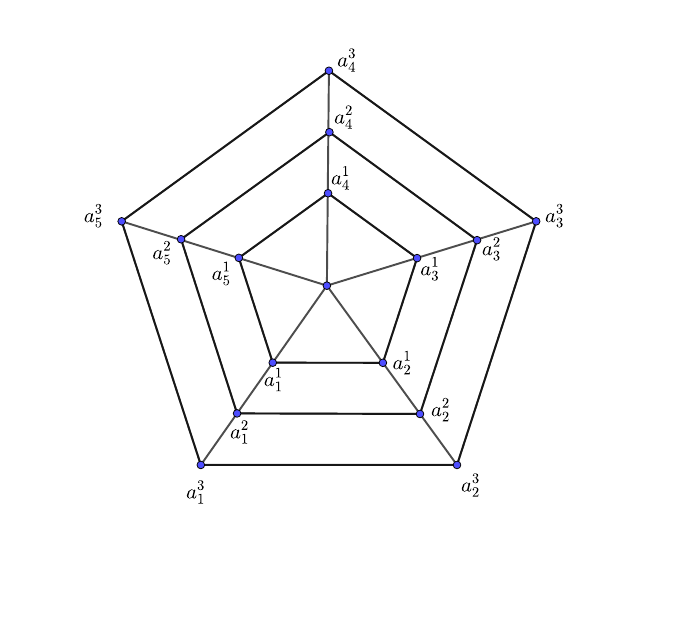}
\caption{The 5-multi wheel with 3 levels.}
\label{P_53}
\end{figure}

We notice that $\mathcal{P}_q^1$ is actually the $q$-wheel (a graph consisting of a $q$-cycle with a center joined to each vertex of the cycle).
\smallskip

If $q=2k+1\geq 3$ then $\mathcal{P}_q^l$ is a strongly involutive self-dual polyhedron. Indeed, let us check this for $\mathcal{P}_q^l$ where the strong involution $\alpha$ is given as follows:
$$\begin{array}{ll}
\alpha (a_i^j)&=(a_{i+k}^{l-j}a_{i+k}^{l-j+1}a_{i-k}^{l-j}a_{i-k}^{l-j+1}), \text{ for each } i=1,\ldots ,q \text{ and } j=1,\ldots ,l-1,\\
\alpha (a_i^l)&=(a_{i+k}^{1}a_{i-k}^{1}c),\\
\alpha (c) &= (a_{1}^{l}\cdots a_q^l).
\end{array}$$

We may now construct the 3-multi wheel $G=\mathcal{P}_3^1$ by means of the action of its antipodal pairing. Figure \ref{O4} gives the fundamental region $R$ and its doodle. This doodle can be naturally extended to generate a family of self-dual polyhedra whose pairing is $[q] \triangleleft [2,q]$. 
\smallskip

Let us consider an isometric embedding of the graph $G=\mathcal{P}_3$ given in Figure \ref{P3}. 

\begin{figure}[h]
\centering
\includegraphics[scale=0.81]{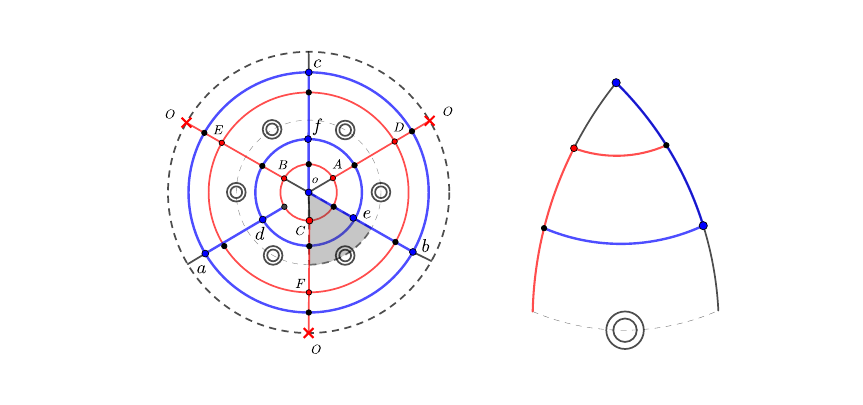}
\caption{(Left) A fundamental region of $[3]\triangleleft [2^+,6]$. (Right) The doodle of $\mathcal{P}_3^1$.}
\label{P3}
\end{figure}

In this embedding, the vertices $a,b$ and $c$ are in a circle while $d,e$ and $f$ are in another circle. These circles are concentric to the center $o$, which we can assume to be the south pole. The red vertices correspond to the dual graph and they are also in concentric circles, in this case $D,E$ and $F$ are connected with the central vertex $O$ located in the north pole (dotted external line in the figure).
\smallskip

We can observe that the reflection on the equator does not work as duality but the rotation by $\pi$ around the circle in double lines in the shaded region is a duality isomorphism. This rotation is color reversing: $e$ goes to $F$, the red point $C$ goes to $b$ and the central vertex $o$ goes to $O$. If we call this rotation $\rho$, then the group $\mbox{Dual}(\mathcal{P}_3^1)$ is generated by $\gamma _1, \gamma _2, \rho$ and in Coxeter notation it corresponds to $[2^{+},6]$. The duality pairing is $[3]\triangleleft [2^{+},6]$. 
\vspace{0.2cm}

The above construction can be extended for each odd $q$ and any $l$. We can start by restricting the graph $P_5^1$, which is the 5-wheel, to a fundamental region and then we can add more levels in order to obtain a simple, planar and 3-connected graph. See Figure \ref{doodleP_ql}.

\begin{figure}[h]
\begin{center}
\includegraphics[scale=0.62]{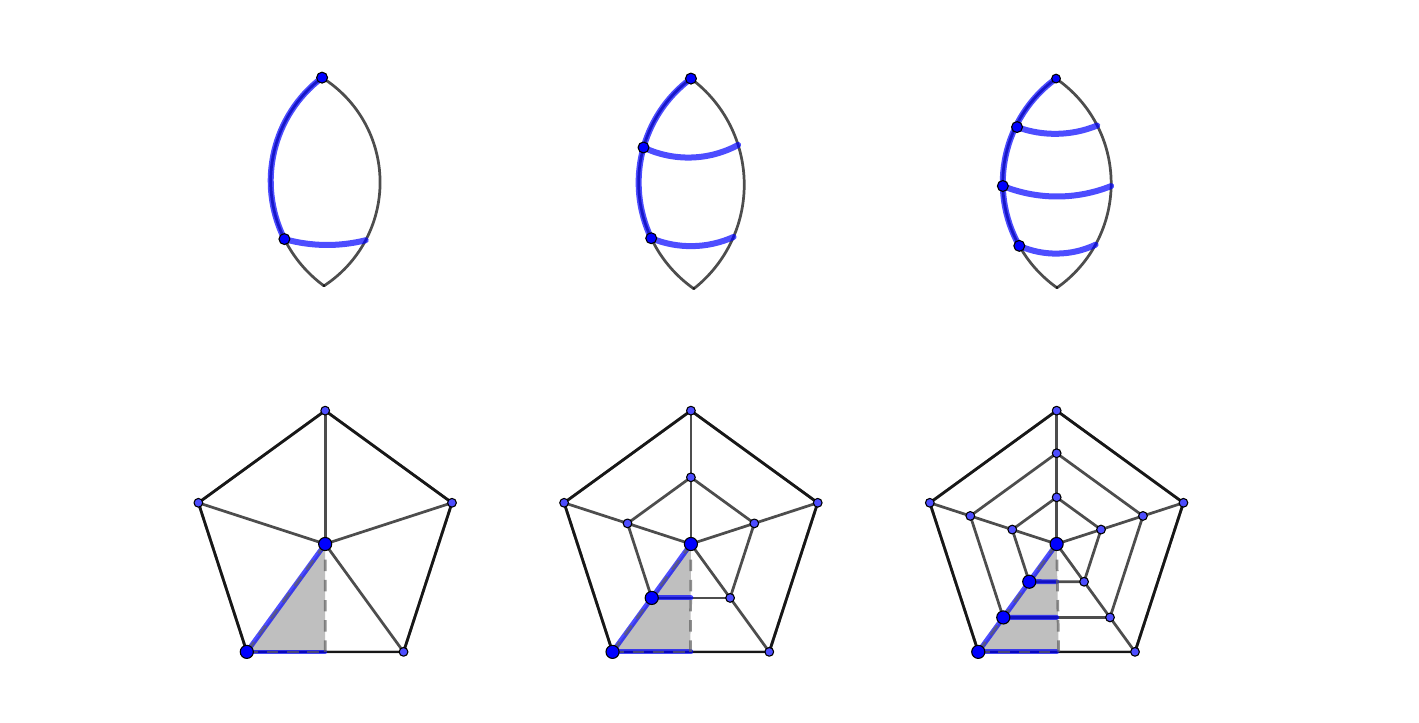}
\end{center}
\caption{The doodle on a fundamental region $R$ of $\mathcal{P}_q^l$ and the resulting graphs for $q=5$.}
\label{doodleP_ql}
\end{figure}

In Figure \ref{P_53a} we have a fundamental region $R$ for the group $[q]$. It is between two vertical planes forming an angle of $\frac{\pi}{q}$. The circle in the center of the region denotes a rotation by $\pi$. This rotation sends the blue lines in the doodle to the red lines in the doodle and viceversa since it is a duality isomorphism in the pairing $[q]\triangleleft [2^+ ,2q]$. If we let $[q]$ act on $\mathbb{S}^2$ we may obtain $\mathcal{P}_q^l$. 

\begin{figure}[h]
\begin{center}
\includegraphics[scale=0.37]{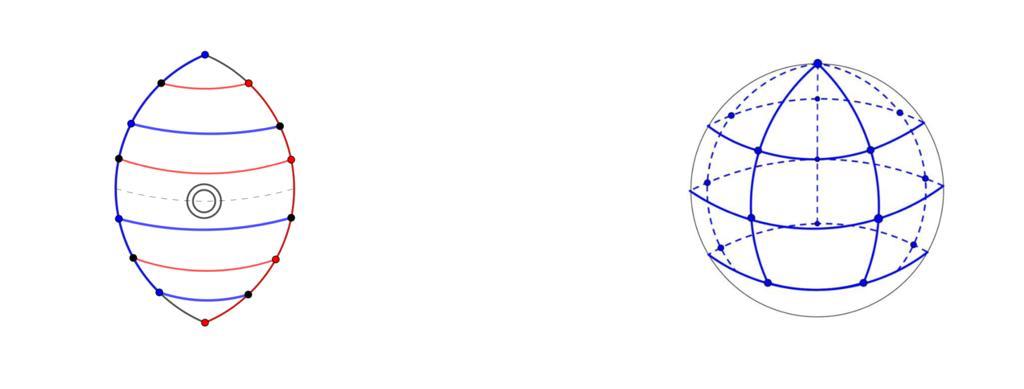}
\end{center}
\caption{(Left) The doodle in a fundamental region $R$ of $\mathcal{P}_5^3$. (Right) Resulting graph embedded on $\mathbb{S}^2$.}
\label{P_53a}
\end{figure}
\section{Concluding Remarks}

In \cite[Theorem 6] {BMPRA}, the authors showed that if $G$ is a strongly involutive polyhedron then there always exists
an edge $e \in E(G)$ such that $G'=G/\{e\} \setminus \{\tau (e)\}$ is also an strongly involutive polyhedron where $\tau$ is the strong involution and $G \setminus \{f\}$ (resp. $G/\{f \}$) denotes the deletion (resp.\ contraction) of an edge $f$ in $G$. It can be checked that the inverse of the delete-contraction operation corresponds to the diagonalization faces of the graph and its dual simultaneously. The latter can be settled as an {\em add-expansion} operation from $G'$ to obtain $G$.
\smallskip

The above implies that all strongly involutive polyhedra can be reduced to a wheel (with an odd number of vertices in the main cycle) by a finite sequence of {\em delete-contraction} operations (applied simultaneously each time), see \cite[Corollary 7] {BMPRA}. We end with the following

\begin{question} Let $G$ be a strongly involutive polyhedron and let $G'$ be the strongly involutive polyhedron obtained from $G$ by applying a delete-contraction operation. Can the antipodal pairing of $G$ be determined from the antipodal pairing of $G'$? What about the corresponding orbifolds and doodles? 
\end{question}

\end{document}